\newcommand{\RR}{{\mathbb{R}}}
\newcommand{\CC}{{\mathbb{C}}}
\newcommand{\CCC}{{\cal{C}}}
\newtheorem{lemma} {Lemma}
\newtheorem{prop} {Proposition}
\newtheorem{theo} {Theorem}
\newtheorem{cor} {Corollary}
\newtheorem*{theo*} {Theorem}
\newtheorem{lemL}{Lemma}
\newtheorem{thmL}{Theorem}
\renewcommand{\qed}{\hfill \mbox{\raggedright \rule{.1in}{.1in}}}
\newcommand{\Ran}{\operatorname{Ran}}
\title{Global folds between Banach spaces as perturbations}
\author{Marta Calanchi, Carlos Tomei and André Zaccur}
\date{}
\begin{document}
\maketitle

\begin{abstract}
We define (linear) amenable  operators $T$ and  (nonlinear) compatible maps $P$ such that their sum $F = T - P$ is a global fold. The scheme encapsulates most of the known examples and the weaker hypotheses suggest new ones. Thus,
$T$ might be the Laplacian with various  boundary conditions, as in the Ambrosetti-Prodi theorem, or the operators associated with the quantum harmonic oscillator or the hydrogen atom,  a spectral fractional Laplacian, a (nonsymmetric) Markov operator. Compatible maps include the Nemitskii map  $P(u) = f(u)$, but may be non-local, even non-variational. Folds arise from nonlinear perturbations which interact with a finite number of eigenvalues of the linear part, and their numerics can be treated with appropriate global Lyapunov-Schmidt decompositions.

For self-adjoint operators, we use results on the non-degeneracy of the ground state. On Banach spaces, a similar role is played by a recent extension by Zhang of the Krein-Rutman theorem. 
\end{abstract}

\medbreak

{\noindent\bf Keywords:}  Ambrosetti-Prodi theorem, folds, Krein-Rutman theorem, positivity preserving semigroups.

\smallbreak

{\noindent\bf MSC-class:} 35J65, 47H15, 47H30, 58K05.

\section{Introduction}\label{sec:intro}

A continuous function $F:X \to Y$ between real Banach spaces $X$ and $Y$ is a {\it  fold} if there is a Banach space $W$ and homeomorphisms $\zeta:X \to W \oplus \RR  $ and $\xi: Y   \to W \oplus\RR  $ such that $\tilde{F}(w,t) \, = \, \xi \circ F \circ \zeta^{-1} (w,t) = (w,-|t|)$. The celebrated Ambrosetti-Prodi theorem (\cite{AP}, \cite{A}),  stated below, describes a  class of mappings given by nonlinear differential operators which are  folds.

The result was refined and reinterpreted by  Manes and Micheletti (\cite{MM}), Berger and Podolak (\cite{BP}) and Berger and Church (\cite{BC}).  These works present alternative proofs with more general hypotheses, but, more importantly, introduce different strategies to identify folds. In a similar vein, Church and Timourian (\cite{CT1}, \cite{ChT}) obtained other characterizations as well as sufficient conditions that are easier to check, leading to new examples and simpler arguments.

The sufficient conditions in this text require weaker hypotheses: to name some examples, we obtain folds by adding appropriate nonlinear perturbations to the Laplacian itself  (with Dirichlet, Neumann or periodic boundary conditions),  the {  hydrogen atom} $T v = - \Delta v - v/r$ in $\RR^3$, the  { quantum harmonic oscillator}  $T v (x)= - v''(x) + x^2 v(x)$, the { spectral fractional Laplacian} in bounded domains, (non self-adjoint) { Markov-type operators}.

\bigskip

The identification of such nonlinear maps as folds allows for robust numerical analysis of functions $F = T - P$: folds are special cases of {\it finite spectral interaction}, in which the nonlinear perturbation $P$ interacts with a finite number of eigenvalues of the linear part $T$  (\cite{CT}, \cite{S}).

Let $X \subset Y$ be real Banach spaces,  complexified for purposes of spectral theory. Let $T: X  \to Y$ be a bounded operator with spectrum $\sigma(T) \subset \CC$. An eigenvalue $\lambda \in \sigma(T)$    is {\it elementary} if it is an isolated point of $\sigma(T)$ and the invariant subspace $V_\lambda$ of $T: X \to Y$ associated with $\lambda$ is spanned by the eigenvector $\phi$.
By the Dunford-Schwartz calculus (\cite{Lo}), the spectral components $\{ \lambda\}$ and $\sigma(T) \setminus \{ \lambda\}$ induce complementary projections $\cal P$ and $I-\cal P$: $V_\lambda =  \Ran \cal P$ and $\lambda$ is of algebraic multiplicity one.

\smallskip
Our constructions  depend on the positivity of an eigenvector associated with an elementary eigenvalue.
We sacrifice generality and concentrate on two classes of {\it amenable} operators.
The first one consist of self-adjoint operators with a non-degenerate, positive ground state which is preserved under appropriate perturbations by  arguments in the spirit of the Lie-Trotter formula.

\bigskip
More precisely, let $H = L^2(M, d\mu)$ for a $\sigma$-finite measure space $(M,\mu)$. A self-adjoint  operator $T: D \subset H \to H$ is {\it $m$-amenable} if

\smallskip
\noindent (m-a)
$ \lambda_m = \min \ \sigma(T)$ is an elementary eigenvalue.

\smallskip
\noindent (m-b) For all $t>0$, the operators $e^{-tT}: H \to H$ are positivity improving: for any nonzero $g \ge 0$, $\ e^{-tT} g > 0$ a.e.

\smallskip
From Corollary \ref{cor:equiv}, we obtain an equivalent definition if (m-b) is replaced by

\smallskip
\noindent (m-b') The eigenvector $\phi_m$ associated to $\lambda_m$ can be taken strictly positive in $M$. For all $t>0$, the operators $e^{-tT}: H \to H$ are positivity preserving: for any nonzero $g \ge 0$, $\ e^{-tT} g \ge 0$ and not zero.

\bigskip
Operators of the form $T = - \Delta - V$ for a large class of potentials and geometries are  $m$-amenable. In Section \ref{sec:Laplike}, we define   {\it $m$-compatible} maps $P: H \to H$, the appropriate perturbations for our purposes.

\begin{theo} \label{theo:likem} Let $T:D \to H$ be  an $m$-amenable operator   and $P: H \to H$ be an $m$-compatible  map with   $T$. Then $F = T - P : D \to H$ is  a fold.
\end{theo}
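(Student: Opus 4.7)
The plan is to apply Theorem A to $F = T - P$. One must verify its three hypotheses (LS), (PR), (NT); Theorem A then yields the homeomorphism/global-fold dichotomy, and the fold alternative follows since, along the $\phi_m$-direction, the function $h^a$ produced by the LS-decomposition is genuinely unimodal: this is built into the $m$-compatibility assumption, the ground eigenvalue of the Jacobian crossing zero along that direction.

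For (LS), I would use the orthogonal splitting $H = \langle \phi_m \rangle \oplus W$ with $W = \phi_m^\perp$, parametrize $u = w + t\phi_m$, and perform a Lyapunov-Schmidt reduction. The projection $Q F(w + t\phi_m) = z$ onto $W$ must be solved for $w = w(z,t)$. The key linear ingredient is that for every $u \in D$ the operator $T - DP(u)$, being itself $m$-amenable by $m$-compatibility, has a simple smallest eigenvalue with strictly positive eigenvector $\phi(u)$; since $\phi(u)$ has nonzero component along $\phi_m$, the restriction of the projected Jacobian to the $w$-direction is a linear isomorphism onto $W$. A global inverse function theorem, combined with a continuation argument in $t$, then yields a $C^1$ map $w(z,t)$, producing both the homeomorphism $\Phi$ and the scalar function $h^a(z,t) = \langle F(w(z,t) + t\phi_m), \phi_m\rangle$.

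Properness (PR) should follow routinely from the structure already at hand: the unimodal character of $h^a$ plus its coercivity forces $t$ to stay bounded, while the Fredholm character of $T$ on $W$ combined with the compactness implicit in $m$-compatibility controls $w$.

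The crux is (NT). Assuming three distinct solutions $u_1, u_2, u_3$ with $F(u_i) = y$, each pair $(i,j)$ satisfies
\[
L_{ij}(u_i - u_j) = 0, \qquad L_{ij} = T - \int_0^1 DP\bigl(u_j + s(u_i - u_j)\bigr)\, ds,
\]
and $m$-compatibility propagates $m$-amenability from $T$ to each $L_{ij}$. Hence zero is the elementary ground eigenvalue of $L_{ij}$ and $u_i - u_j$ is a nonzero scalar multiple of a strictly positive eigenfunction; so any two preimages are strictly comparable. A Perron/Krein-Rutman comparison then forces the three ground-state eigenvalues $\lambda_0(L_{12})$, $\lambda_0(L_{23})$, $\lambda_0(L_{13})$ to be strictly ordered by the convex-type monotonicity of $DP$ along $\phi_m$-ordered arguments, contradicting their common vanishing. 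This positivity-versus-convexity clash, leaning on the positivity improving semigroups $e^{-sT}$ (or their Banach-space analogue via Zhang's Krein-Rutman extension), is the main technical obstacle I anticipate.
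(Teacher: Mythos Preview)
Your treatment of (NT) is essentially the paper's: form the mean Jacobians $L_{ij}=T-\int_0^1 DP(u_j+s(u_i-u_j))\,ds$, show they remain $m$-amenable (this uses (m-NT1) together with Faris's lemma and the positivity-improving semigroup of $T$; Zhang's Krein--Rutman result is irrelevant in the self-adjoint setting), conclude that the kernel vectors $u_{i+1}-u_i$ are strictly positive, and then derive a contradiction from the monotonicity hypothesis (m-NT2). The paper needs only the two operators $L_{12}$ and $L_{23}$: since $u_1+t(u_2-u_1)<u_2+t(u_3-u_2)$ pointwise for every $t\in[0,1]$, (m-NT2) forces the bottom of the spectrum of one mean Jacobian strictly below that of the other, so they cannot both vanish.

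For (LS) you take a genuinely different route. The paper does \emph{not} argue via invertibility of the projected Jacobian plus a global inverse function theorem; it shifts by $\gamma=(a+b)/2$ and uses (m-LS) to make $\Pi P_\gamma$ Lipschitz with constant $b-\gamma<\mu-\gamma=\|T_{\gamma,W}^{-1}\|^{-1}$, so that each projected map $F_t:W\to W$ is a uniform bilipschitz homeomorphism by the Banach contraction principle. Your approach can be made to work --- the compression $\Pi\,DF(u)\,\Pi$ on $W$ is bounded below by $\mu-b>0$, which is exactly the uniform bound Hadamard's theorem needs --- but you do not supply this bound, and your stated reason (the ground-state eigenvector having nonzero $\phi_m$-component) only gives local injectivity, not a global diffeomorphism. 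The paper's contraction argument also has the advantage of requiring $P$ merely Lipschitz, not $C^1$.

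Your handling of (PR) and of the homeomorphism/fold alternative has real gaps. There is no ``compactness implicit in $m$-compatibility''; once the $LS$-change of variables is in place, $w$ is determined by $(z,t)$ and needs no separate control. Properness comes entirely from hypothesis (m-PR), which you never invoke: it gives $h^a(z,t)\le(\lambda_m-\lambda_\pm)t-c_\pm$, hence $h^a(z,t)\to-\infty$ as $|t|\to\infty$ uniformly in $z$, and this alone makes $F^a$ proper. The same estimate rules out the homeomorphism alternative: $h^a(z,\cdot)$ is not injective on any vertical line because it tends to $-\infty$ at both ends. Your argument for this step --- that $h^a$ is ``genuinely unimodal'' because the ground eigenvalue crosses zero --- is circular: unimodality is the conclusion of Theorem~A, not an input to it.
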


We now state the Ambrosetti-Prodi theorem in the context of Sobolev spaces.
For a smooth, bounded domain $\Omega \subset \RR^n$, set  $T = - \Delta: D = H^2(\Omega)\cap H^1_0(\Omega) \to H$,  a self-adjoint, m-amenable operator, with smallest eigenvalues $0 < \lambda_m<\mu_m  $.
Take a $C^2$ strictly convex  function $f: \RR \to \RR$ such that
\begin{equation*}
 - \infty <  a = \inf_{x,y \in \RR} \frac{f(x) - f(y)}{x-y} \  < \ \lambda_m < b = \sup_{x,y \in \RR} \frac{f(x) - f(y)}{x-y} \ < \ \mu_m \ .
\end{equation*}
Then the map $P(u) = f(u) $ is  an m-compatible map with   $T$ and the  Ambrosetti-Prodi theorem follows: $F(u) = - \Delta u - f(u)$ is a fold.

Since our arguments do not rely on the smoothness of $P$,  we may  handle unbounded domains $\Omega$, as is the case of the hydrogen atom. In (\cite{CTZ2}), we showed that the convexity of $f$ is essentially {\it necessary}  for $F(u)= - \Delta u - f(u)$ to be a fold --- this led us to consider the different scenarios in this text. The  necessity of convexity holds for other amenable operators, but we do not consider the issue.

\bigskip
For a real Banach space $X$, let ${\cal B}(X)$ be the space of bounded linear operators from $X$ to $X$ equipped with the usual sup norm. Notice that $X$ is not a space of functions necessarily: Theorem \ref{theo:likeM} below is geometric.

A {\it cone} $K \subset X$ is a  closed set for which
\[ 0 \in K \ , \quad K + K \subset K \ , \quad t \ge 0 \Rightarrow tK \subset K , \quad v, -v \in K \Rightarrow v = 0 \ . \]

If cone $K$ is {\it solid} if its interior $\mathring{K}$ is nonempty, and
{\it generating} if $K - K = X$. If $K$ is solid, $K$ is generating and its dual $K^\ast = \{ v^\ast \in X^\ast \ | \ \langle v^\ast , v \rangle \ge 0 ,  \forall v \in K \} $ is nontrivial (\cite{K}). We use the inner product notation for the coupling between a space and its dual. A solid cone $K \subset X$ is {\it special} if both $K \subset X$ and  $K^\ast \subset X^\ast$ are generating.
Let $r(T)$ be the spectral radius of  $T \in {\cal B}(X)$.

\bigskip
An  operator $T \in {\cal B}(X)$  is {\it $M$-amenable} for a special  cone $K$ if

\smallskip
\noindent (M-a) $r(T)$ is an elementary eigenvalue of $T$.

\smallskip
\noindent (M-b) For some $p \ge 0, (T+ p I)(K \setminus \{0\}) \subset  \mathring{K}$ (i.e., $T+pI$ is strongly positive).

\bigskip
Appropriate matrices with positive entries and Markov operators  are examples of $M$-amenability for the cone of positive vectors/functions. By an  extension of the Krein-Rutman theorem due to Zhang (\cite{Zh}, \cite{LZZ} after work by Nussbaum \cite{N} \cite{N2}), M-amenable operators $T$ are stable under appropriate perturbations, namely, the Jacobians of M-compatible maps $P:X \to X$, defined in Section \ref{sec:Nusslike}.

\begin{theo} \label{theo:likeM} Let $T:X \to X$ be  an $M$-amenable operator for a special  cone $K$. Then there is $\delta_T \in \RR$ so that  $F = T - P : X \to X$ is  a fold for every $M$-compatible  $C^1$ map $P: X \to X$ such that $\|DP(u) - r(T) \ I \| < \delta_T$ for all $u \in X$.
\end{theo}

%For a large $R$, amenability of $T$ and of a translation $T + R \ I$,  are essentially equivalent. As the attentive reader will notice along the text, this fact essentially masks the relevance of this feature in the original Ambrosetti-Prodi context, when $T$ is the Laplacian on a bounded set and $P(u) = f(u)$ is a Nemitskii map, where $f:\RR \to \RR$ has bounded derivative.

In the self-adjoint case, the hypotheses are given by spectral data of $T$. For $M$-amenable operators,  we are limited to a perturbation result: the nonlinear term $P - r(T) \ I$ (or better, its Jacobian) has to be sufficiently small.

%In Section \ref{section:strategy} we enumerate the three geometric conditions we use to identify folds. Hypotheses leading to the first two conditions are presented in
%Section \ref {section:general} we introduce hypotheses yielding (AC) and (PR) for a Lipschitz map $F$ (providing details for arguments in \cite{TZ}, \cite{CTZ1}), and  the proofs of Theorems \ref{theo:likem} and \ref{theo:likeM} then depend only on the verification of (NT).  For more general compatible perturbations, we require that the maps $F$ be $C^1$, and consider spectral properties of the Jacobians $DF(u)$ and some averages, the averaged Jacobians $AF(u,v)$, defined in Section \ref{sec:Laplike}.
%For $m$-compatible maps, (NT) is obtained in Section \ref{sec:Laplike} from the study of the smallest eigenvalue of averaged Jacobians. In the $M$-compatible case instead it is the spectral radius which has to be controlled and we prove (NT) in Section \ref{sec:Nusslike}.

In Section \ref{section:strategy} we show how to identify folds by verifying three geometric conditions. Hypotheses leading to the first two conditions are presented in
Section \ref {section:general}. In Section \ref{sec:Laplike} we consider m-amenable operators $T:D \to H$,  define m-compatible maps, prove Theorem \ref{theo:likem} and present some examples. In Section \ref{sec:Nusslike} we proceed in an analogous manner to the proof of Theorem \ref{theo:likeM} for M-amenable operators and M-compatible maps.
%We conclude in Section \ref{section:six} with an example that requires the integration of both contexts.

This text takes \cite{TZ} and \cite{STZ} as starting points and provides the proofs of most of the results stated in \cite{CTZ1}.

%\subsubsection{A simple example: the discrete case}
%
% A natural discretization of the (non-autonomous) Ambrosetti-Prodi function is
% \[ F: \RR^2 \to \RR^2 , F(x,y) = (2 x - y - \alpha(x), -x + 2y - \beta(y)) \ . \]
%We take $N(x,y) = (\alpha(x), \beta(y))$ and
%\[ L =  \left(\begin{array}{rr}
%2 & -1 \\
%-1 & 2\\
%\end{array}\right)  \ , \quad \sigma(L) = \{ 1, 3 \}, \quad \lambda_0=1, \quad \phi_m = (1,1)/\sqrt{2} \ .\]
%The standard hypotheses convert to $\alpha_x , \beta_y < 3$ and $\alpha_{xx}, \beta_{yy} > 0 $.
%Since
%\[ DF =  \left(\begin{array}{rr}
%2 - \alpha_x & -1 \\
%-1 & 2 - \beta_y\\
%\end{array}\right) \ ,\]
%the critical set $\CCC$ is a subset of the hyperbola
%$C = \{ (x,y) \  | \ (\alpha_x - 2) (\beta_y - 2) = 1 \ , \}$: the restrictions on $\alpha_x$ and $\beta_y$ imply that, on $\CCC$, $\alpha_x, \beta_y < 2$.
%In particular, up to a positive multiplicative factor,
%$\phi$ is $(2 - \beta_y, 1)$. Also,
%%\[  D^2 F . \phi_m  =  \left(\begin{array}{rr}
%%- \alpha_{xx} & 0 \\
%%0 & - \beta_{yy}\\
%%\end{array}\right) \ , \]
%\[ D^2 F . \phi  =  \left(\begin{array}{rr}
%- \alpha_{xx}(2- \beta_y) & 0 \\
%0 & - \beta_{yy}\\
%\end{array}\right)\ , \quad \langle \phi, [D^2 F . \phi] \phi \rangle = (\beta_y - 2)^3 \alpha_{xx} - \beta_{yy} \ .\]
%Thus hypothesis (A) holds once we require that $\Ran \alpha_x$ and $\Ran \beta_y$ contain 1.  Hypothesis (C) is true if $\beta_y < 2$, which is indeed true within the critical set $\CCC$.
%
%

\section{The overall strategy} \label{section:strategy}

Start with a Fredholm operator $T: X \to Y$ of index 0, $\dim \ker T = 1$. By a simple linear changes of variable we obtain
\[ T^a: W \oplus \RR \to W \oplus \RR, \quad T^a(w,t) = (w,0) \ , \]
 where $W = \Ran T$. For each $w_0 \in W$ fixed, the image under $T^a$ of a vertical line $\{(w_0,t), t \in \RR \}$ is the point $(w_0,0)$. Consider a unimodal function $h^a(w_0,.)$ whose domain splits  in two intervals on which it is first strictly increasing and then strictly decreasing, and suppose that $h^a(w_0,t) \to -\infty$ as $|t| \to \infty$. Clearly
\[ (w,t) \in W \oplus \RR  \mapsto (w, h^a(w,t)) \in W \oplus \RR \]
is a  fold. A homeomorphism on the domain  which keeps invariant each horizontal plane, $\Psi^a(w,t) = (F^a_t(w),t)$ preserves the fold structure: the functions
\[ F^a(w,t) = (F^a_t(w), h^a(F^a_t(w),t)) \]
are folds. After the work of Berger and Podolak \cite{BP}, the first step to show that a nonlinear differential operator is a fold frequently consists of converting it to this form,  by a  global Lyapunov-Schmidt decomposition (\cite{BP},\cite{BN},\cite{CTZ1},\cite{MST2},\cite{M}).

\bigskip
This approach  led to Proposition 10 in \cite{STZ}, which we restate as Theorem \ref{theo:fold} below. A continuous (resp. Lipschitz, $C^k$) map $F:X \to Y$ admits  {\it adapted coordinates}  if
there is a (Lipschitz, $C^k$) homeomorphism $\Phi: Y \to X$ and a continuous (Lipschitz, $C^k$) $h^a: Y = Z \oplus  \RR \to \RR$ such that $F^a = F \circ \Phi $ is a rank one  nonlinear perturbation of the identity,
\[ F^a : Z \oplus \RR \to Z \oplus \RR  \, , \quad (z, t) \mapsto (z, h^a(z,t)) \, . \]
Clearly, $F$ is a  fold if and only if $F^a$ is. We could have allowed a global change of variable in  $Y$ also, but we do not need it in this text.

\begin{theo} \label{theo:fold} Suppose $F: X \to Y$ is a continuous map. If $F$ satisfies the hypotheses below, it is either a homeomorphism or a  fold.
\item[(AC)]{ $F$ admits adapted coordinates.}
\item[(PR)]{ $F$ is proper. }
\item[(NT)]{ No point of $Y$ has three preimages under $F$.}
\end{theo}

In the coming sections, Theorems \ref{theo:likem} and \ref{theo:likeM}  will be derived from Theorem \ref{theo:fold}.

\bigskip

%We consider $F(u) = T u - P(u)$, where $T$ is a linear operator and $P$  a nonlinear perturbation. The  Ambrosetti-Prodi theorem is the case $T = -\Delta - \lambda_m I$ with Dirichlet boundary conditions on a smooth bounded domain, $\lambda_m$  is its smallest eigenvalue, and  $P$ is the  operator $P(u) = f(u) - \lambda_m u$ for a strictly convex $C^2$ function $f: \RR \to \RR$ such that $f'(\RR)$ intersects the spectrum of $T$ only at $\lambda_m$.

\section{Hypotheses (AC) and (PR): fibers, heights} \label{section:general}

The first two hypotheses of Theorem \ref{theo:fold}, (AC) and (PR), admit a unified treatment for both kinds of amenable operators which we now present.

For $m$-amenable operators $T: D  \to H$, the  original Banach spaces are the real Hilbert space $Y=H = L^2(M, d\mu)$ and $X = D \subset H$, the domain of self-adjointness of $T$ (with norm $\|u\|_D = \|u\|_H + \|Tu\|_H)$. Let $\mathcal P, \Pi = I-\mathcal P: H \to H$ be the projections associated with $\{ \lambda_m\}$ and $\sigma(T) \setminus \{ \lambda_m\}$ and set
\[ V= V_{\lambda_m} = \Ran \mathcal P, \quad W_H = \Ran \Pi,  \quad W_D = W_H \cap D \ . \]
Since $\lambda_m$ is elementary, $\dim V = 1$. The closed subspaces $W_H \subset H$ and $W_D \subset D$ have codimension 1 in the $H$ and $D$ norms, inducing orthogonal decompositions
\[ D = W_D \oplus V \quad , \quad \quad H = W_H \oplus V \ .\]

Clearly, the spectrum of the restriction $T: W_D \to W_H$ is $\sigma(T) \setminus \{ \lambda_m\}$, so that  $T - \lambda_m \ I: W_D \to W_H$ is an invertible operator.

The same is true for $M$-amenable operators $T: X \to X$, where $X=Y$ is a real Banach space.
Let $V$ be the span of $\phi_M$, and set $W_X= \Ran (T - \lambda_M \ I)$, so that again there are decompositions
$ X = Y = W_X \oplus V $, a projection $\Pi: X \to W_X$ and an invertible restriction $T - \lambda_M \ I: W_X \to W_X$. To unify notation, write
\[ F = T - P : W_X \oplus V \to W_Y \oplus V \]
where the spaces are defined in terms of a privileged eigenpair $(\lambda_p, \phi_p)$ and $p$ is $m$ or $M$. Define $\Pi: Y \to W_Y$,  translations $P_\gamma = P - \gamma I , T_\gamma = T - \gamma I: X \to Y$ and the restriction $T_{\gamma,W} = T_W - \gamma I: W_X \to W_Y$.

\begin{prop} \label{prop:LS}
For an amenable operator $T: X \to Y$, hypothesis (AC) holds for the map $F = T - P: X \to Y$ if
\begin{enumerate}
\item [(HAC)] For some $\gamma \in \RR$,  $T_{\gamma,W}: W_X \to W_Y$ is invertible and $\Pi  P_\gamma: X \to W_Y$ is  Lipschitz  with a constant $L$ satisfying $ L \| T_{\gamma,W}^{-1} \| < 1$.
\end{enumerate}

\end{prop}

The  projection $\Pi$ in (HAC) does not appear in the usual arguments when $P$ is a Nemitskii map. Indeed, a function $f$ whose derivatives takes values close to $\lambda_p$ gives rise to a Nemitskii map $P(u)$ which looks roughly like a multiple of the identity. Thus,  $f$ acts rather homogeneously in all directions of space. The situation for more general maps $P$ is different: in adapted coordinates, as presented in the Introduction, large distortions along vertical lines in the image  do not change the global nature of the fold. Such distortions correspond to large values of $P(u)$ along $\phi_m$  and are  trivialized by the action of the projection $\Pi$.

There are other approaches to obtain (AC) (\cite{R}, \cite{MST2}), but they are not relevant to us.

\bigskip
\begin{proof}
The argument extends \cite{BP} and \cite{TZ}. Write $u = \Pi u + t \phi_p = w + t \phi_p$, for $w \in W_X$.
For $t \in \RR$, define the projected restrictions $F_t : W_X \to W_Y$,
\[   F_t(w) = \Pi F(w + t \phi_p)= \Pi (T-P)(w + t \phi_p)= T_{\gamma,W} w - \Pi P_\gamma(w + t \phi_p)  \ . \]
Set $T_{\gamma,W} w = y$ to obtain
\[ F_t \circ T_{\gamma,W}^{-1}: W_Y \to W_Y, \ F_t (T_{\gamma,W}^{-1} y) = y - \Pi P_\gamma( T_{\gamma,W}^{-1} y  + t \phi_p) = y - K_t(y) .\]
We bound variations of $K_t: W_Y \to W_Y$. For $z, {\tilde z} \in W_Y$, $s , {\tilde s} \in \RR$,
\[ \| K_s(z) - K_{\tilde s}({\tilde z}) \| = \| \Pi P_\gamma(T_{\gamma,W}^{-1}z + s \phi_p) - \Pi P_\gamma(T_{\gamma,W}^{-1}{\tilde z} + {\tilde s} \phi_p) \| \]
\[ \le L  \| T_{\gamma,W}^{-1}(z - \tilde z)\| +  \CCC |s - \tilde s| \le c  \| z - \tilde z\| +  \CCC |s - \tilde s|  \]
for $c  < 1$ by (HAC) and $\CCC$ possibly large.

From the Banach contraction theorem, $I - K_t: W_Y \to W_Y$ are (uniform) Lipschitz bijections.
We show that the maps $(Id-K_t)^{-1}: W_Y \to W_Y$ are also uniformly  Lipschitz: set $z_i=(Id-K_t)^{-1}(y_i),\ i=1,2$ and then
$$||z_1-z_2||\le ||y_1-y_2||+||K_t(z_1)-K_t(z_2)||\le ||y_1-y_2|| +c||z_1-z_2|| $$
so that
$$\displaystyle \|z_1 - z_2 \|\le \frac{1}{1-c} \ \|y_1 - y_2 \| \ .$$

Thus $F_t = (I - K_t) \circ T_{\gamma,W}: W_X \to W_Y$ are also uniformly bilipschitz homeomorphisms. We now show that
\[ \Phi^{-1} = \Psi=\left(F_t, \ \hbox{Id} \right): X\to Y \]  is
a bilipschitz homeomorphism. Clearly, $\Psi$ is a Lipschitz bijection. To handle $\Psi^{-1} = \Phi$, take $v=y+s\phi_p$ and $\tilde v=\tilde y+\tilde s\phi_p$ (the letter $\CCC$ represents different constants along the computations):
\[
||\Phi(v)-\Phi(\tilde v)|| \le
\CCC \left( ||(F_s)^{-1}(y)-(F_{\tilde s})^{-1}(\tilde y)||+||s\phi_p-\tilde s\phi_p|| \right)
\]
\[
\le \CCC \left( ||(F_s)^{-1}(y)-(F_{\tilde s})^{-1}( y)||+||(F_{\tilde s})^{-1}(y)-(F_{\tilde s})^{-1}(\tilde y)||+|s-{\tilde s}| \right)
\]
For the second term use the Lipschitz bound for $F_{\tilde s}^{-1}$. For the first, we prove
$ || F_s^{-1}(y) - F_{\tilde s}^{-1}(y)|| \le \CCC | s - {\tilde s} | $.
As before, set $w = F_s^{-1}(y), {\tilde w} =  F_{\tilde s}^{-1}(y)$ and
\[ z  = T_{\gamma,W} \circ F_s^{-1}(y)= (1 - K_s)^{-1}(y) \ , \ {\tilde z} =  T_{\gamma,W} \circ F_{\tilde s}^{-1}(y)= (1 - K_{\tilde s})^{-1} (y) \ .\]
The iterations yielding $z$ and $\tilde z$,
$$
z_0=0, \; z_{j+1}=K_s(z_j)+y \ \ {\rm and}\ \ {\tilde z}_0=0, \; {\tilde z}_{j+1}=K_{\tilde s}({\tilde z}_j)+y
$$
imply the estimates
\[ ||z_{j+1} - {\tilde z}_{j+1}|| = ||K_s(z_j) - K_{\tilde s}({\tilde z}_j))||
\le c \ ||z_j - {\tilde z}_j|| + \CCC \ |s - {\tilde s} |  \ , \]
and for $j\to +\infty$,
%\[
%||z - {\tilde z}|| \le n  |s - {\tilde s} | \ ||\phi|| +c ||z - {\tilde z}||\ \]
\[|| F_s^{-1}(y) - F_{\tilde s}^{-1}(y)|| = || w - {\tilde w}|| \le ||T_{\gamma,W}^{-1}||
|| z - {\tilde z} || \le \CCC \frac{ \ ||T_{\gamma,W}^{-1}||}{1-c} \ | s - {\tilde s}| \ . \]
Adding up,
\[
||\Phi(v)-\Phi(\tilde v)|| \le \CCC \frac{ \ ||T_{\gamma,W}^{-1}||}{1-c} \ | s - {\tilde s}| + \CCC \|y - {\tilde y}\| + ||s\phi_p-\tilde s\phi_p|| \ , \]
completing the proof that $\Psi: X \to Y$ is a bilipschitz homeomorphism.
Hypothesis (AC) is clear from the following diagram,  for $F^a=F \circ \Phi$:
\[
\begin{array}{ccl}
{D = W_X \oplus V}&
\stackrel{{\scriptstyle F}}{\longrightarrow}&
{H = W_Y \oplus V} \\
   {\scriptstyle \Psi=( F_t, Id)}\searrow &
      &
\nearrow{\scriptstyle F^a=F \circ \Phi=(Id, h^a)}\\
    &{W_Y \oplus V}

&  \\
  \end{array}
\]
\qed
\end{proof}

\smallskip

%Before discussing the (PR) hypothesis, we need to prove that
Elementary eigenvalues are preserved under duality.

\begin{prop} \label{prop:phistar}
If $\lambda \in \RR$ is an elementary eigenvalue of $T: X\subset Y \to Y$ associated to an eigenvector $\phi$, then $\lambda$ is also an elementary eigenvalue of the adjoint operator $T^\ast: Y^\ast \to X^\ast$. The invariant subspace under $T^\ast$ associated with $\lambda$ is spanned by the eigenvector $\phi^\ast \in Y^\ast$,
the functional which is zero on $\Ran(T - \lambda I)$ and normalized so that $\langle \phi^\ast, \phi \rangle = 1$.
\end{prop}

\begin{proof} Since $\sigma(T) = \sigma(T^\ast)$, the Dunford-Schwartz calculus again obtain complementary projections $\cal Q$ and $I - {\cal Q}$ whose images are (closed) invariant subspaces of $T^\ast$ associated to $\lambda$ and $\sigma(T^\ast) \setminus \{\lambda\}$. Also, $\dim \Ran {\cal Q} = 1$  and an eigenvector $\phi^\ast$ spanning $\Ran {\cal Q}$ is constructed by the Hahn-Banach theorem as follows. For $w \in \Ran(T - \lambda I)$, it should satisfy
\[ \langle \phi^\ast, w \rangle  \ = \ \langle \phi^\ast, (T - \lambda I) u \rangle \ = \ \langle (T - \lambda I)^\ast \phi^\ast,  u \rangle \ = \ 0 \ . \]
Now, $ \phi \notin \Ran(T - \lambda I)$ otherwise $\phi = (T - \lambda I) v$ for $v \ne \phi$ (since  $(T - \lambda I)\phi = 0$) and then $v \in V_\lambda$, the invariant subspace associated to $\lambda$: this may not happen because $\dim V_\lambda = 1$. We extend $\phi^\ast$ beyond $\Ran(T - \lambda I)$ by requiring $\langle \phi_M^\ast, \phi_M \rangle = 1$. It is easy to see  that indeed $(T - \lambda I)^\ast \phi^\ast = 0$.
\qed
\end{proof}

\bigskip

Again, to unify notation, set $\phi_p = \phi_m$ or $\phi_M$ and $\phi_p^\ast = \phi_m$ or $\phi_M^\ast$.
For a fixed $z \in W_Y$, the inverse of a vertical line $\{(z, s), s \in \RR\}$  under $\Psi$ is a {\it fiber} $\{ u(z,t) = w(z,t) + t \phi_p , \ t \in \RR \}$.
The {\it height function} is
\[ h: D= W_X \oplus V \to \RR, \ h(u) = \langle \phi_p^\ast, \ F(u) \rangle = \langle \phi_p^\ast,\  Tu - P(u) \rangle\ . \]

\begin{prop} \label{prop:proper}
For $T:X \to Y$ amenable and $P: X \to Y$, assume  (HAC) and
\begin{enumerate}
\item [(HPR)] There exist  $\lambda_-, \lambda_+, c_-, c_+ \in \RR$ with $\lambda_- < \lambda_p < \lambda_+ $ for which
 \[    \forall \ u \in X \ , \quad
  \ \langle \phi_p^\ast, P(u) \rangle \, \ge \, \lambda_-  \, \langle \phi_p^\ast , u \rangle  \  + c_- \, ,  \ \lambda_+ \, \langle \phi_p^\ast , u \rangle  \ + c_+ \, . \]
 \end{enumerate}
 Then $F=T-P: X \to Y$ satisfies hypothesis (PR) (i.e., $F$ is proper). Also,
for $t \to \pm \infty$, the height functions go to $-\infty$ along fibers.
\end{prop}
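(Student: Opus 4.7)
The strategy is to pull back via the $LS$-decomposition supplied by Proposition \ref{prop:LS} and reduce properness of $F$ to properness of the adapted form $F^a(z,t) = (z, h^a(z,t))$. Since $\Phi: Y \to X$ is a homeomorphism and $F \circ \Phi = F^a$, the map $F$ is proper if and only if $F^a$ is. For $F^a$ the horizontal component is the identity, so the whole question reduces to controlling $t$ along fibers of $h^a$ whose images lie in a bounded set.

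The first concrete step would be to compute $h^a$ explicitly along a fiber. The fiber over $z \in W_Y$ is parametrized by $\Phi(z, t) = u(z, t) = w(z, t) + t\, \phi_p$ with $w(z, t) \in W_X$. Using the normalizations $\langle \phi_p^\ast, \phi_p \rangle = 1$ and $\phi_p^\ast|_{W_X} = 0$, one has $\langle \phi_p^\ast, u(z,t) \rangle = t$. Since $\phi_p^\ast$ is an eigenvector of the appropriate adjoint action of $T$ with eigenvalue $\lambda_p$ (self-adjointness in the $m$-amenable case, $T^\ast \phi_M^\ast = \lambda_M \phi_M^\ast$ in the $M$-amenable case), a pairing collapses the linear part of $F$ and gives
\[
 h^a(z, t) \, = \, \langle \phi_p^\ast, F(u(z,t)) \rangle \, = \, \lambda_p \, t \, - \, \langle \phi_p^\ast, P(u(z,t)) \rangle.
\]
Feeding the two lower bounds from (HPR) into this identity and using $\langle \phi_p^\ast, u(z,t)\rangle = t$ yields the upper estimate
\[
 h^a(z, t) \, \le \, \min \bigl\{\,(\lambda_p - \lambda_-)\, t - c_-,\; (\lambda_p - \lambda_+)\, t - c_+\,\bigr\}.
\]
Since $\lambda_p - \lambda_- > 0$ and $\lambda_p - \lambda_+ < 0$, the right-hand side tends to $-\infty$ as $|t| \to \infty$, and crucially it depends only on $t$ and not on the horizontal coordinate $z$.

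To conclude I would verify (PR) directly for $F^a$. Let $K \subset W_Y \oplus \RR$ be compact and $(z_n, t_n)$ a sequence with $F^a(z_n, t_n) = (z_n, h^a(z_n, t_n)) \in K$. Compactness of $K$ furnishes a convergent subsequence of $\{z_n\}$ in $W_Y$ and a uniform bound $|h^a(z_n, t_n)| \le M$. Combined with the $z$-uniform blow-up displayed above, this bound forces $\{t_n\}$ to stay in a bounded interval of $\RR$, whence a further subsequence converges. Thus $(z_n, t_n)$ has a convergent subsequence in $W_Y \oplus \RR$, so $F^a$ is proper and, via the homeomorphism $\Phi$, so is $F$.

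The only delicate point is the identity $h^a(z,t) = \lambda_p t - \langle \phi_p^\ast, P(u(z,t)) \rangle$: it is what makes the linear contribution $\langle \phi_p^\ast, T u \rangle$ collapse to $\lambda_p t$ independently of $w$, and thereby turns the pointwise assumption (HPR) into a bound on $h^a$ that is uniform in $z$. Once this identity is recognized, the rest of the proof is just compactness bookkeeping.
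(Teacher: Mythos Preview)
Your proof is correct and follows essentially the same approach as the paper: compute the height along a fiber as $h^a(z,t)=\lambda_p t-\langle\phi_p^\ast,P(u(z,t))\rangle$, use (HPR) to get the $z$-uniform upper bound tending to $-\infty$ as $|t|\to\infty$, and then conclude properness of $F^a$ (hence of $F$) by the obvious compactness argument. If anything, your write-up is slightly more careful in consistently using $\phi_p^\ast$ and in spelling out the properness step as an extraction of subsequences rather than the paper's contradiction phrasing.
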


\begin{proof}
From (HPR),
\[
h(u(z,t))
= \langle \phi_p^\ast, T w(z,t) + tT \phi_p\rangle - \langle \phi_p^\ast, P(u(z,t))\rangle
 \]
\[  = t \lambda_p  - \langle \phi_p^\ast, P(u(z,t))\rangle \ \le \ (\lambda_p - \lambda_-) \ t - c_- \ , \ (\lambda_p - \lambda_+) \ t - c_+ , \]
and the limits follow ($t \to -\infty$ from the first bound, $t \to \infty$ from the second),  together with their uniformity in $z$. Since the homeomorphism $\Psi: X \to Y$ preserves the horizontal component, $\lim_{|t| \to \infty} h^a(z,t) = - \infty$.

The properness of  $F: X \to Y$ is equivalent to that of $F^a: W_Y \oplus V \to W_Y \oplus V$, which we prove. Take $(z_n, s_n) =(z_n, h^a(z_n, t_n) ) \to (z_\infty, s_\infty)$. If $|t_n| \to \infty$,  then $h^a(z_\infty, t_n) \to -\infty$, contradicting the uniform convergence at $z_\infty$. \qed
\end{proof}

\bigskip
The uniformity of the convergence to infinity of the height function $h^a$ along vertical lines and of $h$ along a fiber $w(t) + t \phi_p, t \in \RR$ is actually the same statement, due to the fact that the map $\Psi = (F_t, Id)$ in Proposition \ref{prop:LS} is bilipschitz.

\bigskip

\section{$m$-compatible maps }  \label{sec:Laplike}

Folds related to self-adjoint elliptic operators different from the Dirichlet Laplacian  were presented  before (\cite{B}).  We consider the larger class of $m$-amenable operators $T: D \to H$:
the  spaces $X$ and $Y$ are  $Y = H = L^2(M, d\mu)$ for a $\sigma$-finite measure space $(M,\mu)$ and $X= D \subset H$ is the domain of self-adjointness of $T$ equipped with the norm $\| u \|_D = \| u \|_H + \| T u \|_H$.
Euclidean space $D = H = \RR^n$ is the case when $\mu$ is a finite collection of deltas.

For  an m-amenable operator $T:D \to H$  with spectrum $\sigma(T)$, $\lambda_m = \min \sigma(T)$ is an elementary eigenvalue  associated with the eigenvector $\phi_m >0$. Define $\mu_m = \inf \sigma(T) \setminus \{ \lambda_m\}$.

We introduce two kinds of m-compatible maps. A function $f: \RR \to \RR $ induces a {\it Nemitskii $m$-compatible} map $ P:H \to H \ ,   u \mapsto f(u) $ with    $T:D \to H$
%\[ P:H \to H \ ,  \quad u \mapsto f(u) \]
if $f$ is a strictly convex  function $f: \RR \to \RR$ such that
\begin{equation}\label{nem}
-\infty <  a = \inf_{x,y \in \RR} \frac{f(x) - f(y)}{x-y} \  < \ \lambda_m < b = \sup_{x,y \in \RR} \frac{f(x) - f(y)}{x-y} \ < \ \mu_m \ .
\end{equation}

In order to describe the other kind of m-compatible map, we follow the notation from \cite{RS}. A function  $u \in H$ is {\it positive} if $u \ge 0 \ a.e.$ and $u \ne 0$. A bounded operator $A: H \to H$ is {\it positivity preserving} if $Au$ is positive for all positive $u$.
A bounded self-adjoint operator $A: H \to H$ is
{\it positivity improving} if
for any positive $u$, $Au > 0$ a.e.

\bigskip
A $C^1$ map $P: H \to H$ is {\it standard  $m$-compatible} with   an m-amenable operator $T:D \to H$ if it satisfies  the following properties.

\begin{enumerate}
\item [(m-AC)] There are $a, b \in \RR$, $a < \lambda_m < b < \mu_m$, such that   $DP(u): H \to H$ is a bounded, symmetric operator with $\sigma(DP(u)) \subset [a,b]$ for all $u \in D$.
%\[ \sup \langle v \ ,\  DP(u) \ v \rangle  < \mu \ \langle v , v \rangle \ . \]
\item [(m-PR)] There exist  $\lambda_-, \lambda_+, c_-, c_+ \in \RR$ with $\lambda_- < \lambda_m < \lambda_+ $ for which
 \[    \forall \ u \in D \ , \quad
  \ \langle \phi_m, P(u) \rangle \, \ge \, \lambda_-  \, \langle \phi_m , u \rangle  \  + c_- \, ,  \ \lambda_+ \, \langle \phi_m , u \rangle  \ + c_+ \, . \]
\item[(m-Pos)]   There is $c$ such that, for every $u \in D$, $c \ I + DP(u): H \to H$ is positive preserving.
\item [(m-NT)] For $u, v,y \in D, y \ne 0$,  if $v- u > 0 \ a.e.$ then $\langle y, (DP(v)- DP(u)) \ y \rangle  > 0$.
%\item [(m-NT)] If $u < v < w$, then $\langle w - v , P(v) - P(u) \rangle \ < \ \langle v - u , P(w) - P(v) \rangle \ . $
\end{enumerate}

An alternative to (m-NT) is the following.

\begin{enumerate}
\item [(m-NT2)] If $v>u $ a.e.,  $DP(v)- DP(u)$ is positive preserving.
\end{enumerate}

When $f:\RR \to \RR$ is smooth, Nemitskii maps between  H\"{o}lder spaces are smooth, but are usually only Lipschitz between Hilbert spaces. Thus, Jacobians of $F(u) = T -  f(u)$ are not available. The lack of smoothness is circumvented in the proofs  by the fact that Nemitskii maps are local.

\bigskip

Proposition \ref{prop:mamenom} below is the missing ingredient in the proof of Theorem \ref{theo:likem}: the operators $T - DP(u): D \to H$ (and some variations) are still $m$-amenable. Lemma A below is exercise 91 of Chapter XIII from \cite{RS}, a result by Faris \cite{F}. Lemma B is Theorem XIII.44 of \cite{RS}.

\begin{lemL} \label{lemma:Faris} Let $T: D \to H$ be a self-adjoint operator for which $e^{-tT}$ is positivity improving for $t >0$. Let $A: H \to H$ be a positivity preserving, bounded, symmetric operator. Then, for $t>0$, $e^{-t(T-A)}$  is positivity improving.
\end{lemL}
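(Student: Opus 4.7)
The plan is to use the Duhamel / Dyson expansion of $e^{-t(T+V)}$ around $e^{-tT}$, which writes the full semigroup as $e^{-tT}$ plus a series of termwise nonnegative corrections; the strict positivity of $e^{-tT}f$ then passes to $e^{-t(T+V)}f$ upon summation.

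The first step is to set up the expansion. Since $V$ is bounded, $-(T+V)$ generates a $C^0$-semigroup (Phillips perturbation), and iterating the Duhamel identity yields
\[
e^{-t(T+V)} \, = \, \sum_{k\ge 0} D_k(t) \ , \quad D_0(t) \, = \, e^{-tT} \, ,
\]
\[
D_{k+1}(t) \, = \, \int_0^t e^{-(t-s)T}\,(-V)\,D_k(s)\,ds \, ,
\]
the series converging in operator norm (dominated termwise by $\|e^{-tT}\|\,(t\|V\|)^k/k!$).

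The second step is to show that each $D_k(t)$ is positivity preserving, by induction on $k$. The base case $D_0(t)=e^{-tT}$ is positivity improving, hence in particular preserving. Assuming $D_k(s)$ preserves positivity for every $s\ge 0$, the integrand $e^{-(t-s)T}\,(-V)\,D_k(s)\,f$ is a.e.\ nonnegative for $f\ge 0$ (compose three positivity preservers: first $D_k(s)$, then $-V$, then $e^{-(t-s)T}$), and the integral over $s\in[0,t]$ of an a.e.\ nonnegative function remains nonnegative. Summing the Dyson series, for any nonzero $f\ge 0$,
\[
e^{-t(T+V)} f \, = \, e^{-tT}f \, + \, \sum_{k\ge 1} D_k(t) f \, \ge \, e^{-tT} f \, > \, 0 \quad \text{a.e.},
\]
the strict inequality being the hypothesis that $e^{-tT}$ is positivity improving. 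This proves the lemma.

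The only non-routine ingredient is the Dyson expansion itself, with the mild concern that $T$ may be unbounded: the identity must be read as an equality of bounded operators on $H$, which is legitimate because $V$ is bounded and the series converges in $B(H)$. I do not expect a serious obstacle, as the argument is essentially Faris's original one and depends only on standard bounded-perturbation theory for semigroups, invoked here as a black box.
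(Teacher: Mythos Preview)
Your argument via the Dyson/Duhamel expansion is correct and is indeed essentially Faris's original proof. The paper does not actually supply a proof of this lemma: it simply records it as ``exercise 91 of Chapter XIII from \cite{RS}, a result by Faris \cite{F}'' and uses it as a black box. So there is nothing to compare against, beyond noting that your write-up is precisely the argument the citations point to.

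Two minor points of polish, neither affecting correctness. First, the bound $\|D_k(t)\|\le \|e^{-tT}\|(t\|V\|)^k/k!$ tacitly uses that $T$ is bounded below (so that $\sup_{s\in[0,t]}\|e^{-sT}\|<\infty$); this is implicit in the hypothesis that $e^{-tT}$ is a bounded operator for each $t>0$, but worth a word. Second, the claim that the Bochner integral $\int_0^t(\cdots)\,ds$ of a nonnegative $H$-valued integrand is again nonnegative is justified because the positive cone in $L^2$ is closed and convex.
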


\begin{lemL} \label{lemma:44} Let $S: D \to H$ be a self-adjoint operator that is bounded from below. Suppose that $e^{-tS}$ is positivity preserving for $t>0$  and that $E = \min \sigma(S)$ is an eigenvalue. Then the following are equivalent.

\begin{enumerate} [(a)]
\item $E$ is a simple eigenvalue with a strictly positive eigenvector.
%\item $(H - \lambda)^{-1}$ is positivity improving for all $\lambda< E$.
\item For all $ t > 0 $, $e^{-tS}$ is positivity improving.
\end{enumerate}
\end{lemL}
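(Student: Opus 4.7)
This is a classical Perron--Frobenius-type theorem (Theorem XIII.44 of Reed--Simon vol.~IV). My plan is to treat the two implications separately, anticipating that (a)$\Rightarrow$(b) will be the delicate direction.

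For (b)$\Rightarrow$(a), I would follow the standard ``positive and negative parts'' argument. Since $e^{-tT}$ commutes with complex conjugation, it suffices to consider a real eigenvector $\psi$ at level $E$. First I would establish the Beurling--Deny inequality $\langle|\psi|, T|\psi|\rangle \le \langle\psi, T\psi\rangle = E\|\psi\|^2$, which is a standard consequence of positivity preservation of the semigroup (either via quadratic forms or by spectral calculus applied to $|e^{-tT}\psi|\le e^{-tT}|\psi|$); since $E = \min\sigma(T)$, equality is forced, placing $|\psi|$ in $\ker(T-E)$. Consequently $\psi_\pm = (|\psi|\pm\psi)/2$ both lie in $\ker(T-E)$. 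If both were nonzero, (b) would give $e^{-tT}\psi_\pm = e^{-tE}\psi_\pm > 0$ a.e., contradicting that $\psi_+$ vanishes on $\{\psi<0\}$. Hence every real eigenvector has constant sign. Two linearly independent eigenvectors, after sign normalization, would both be strictly positive a.e.\ by (b); a Gram--Schmidt step then produces an orthogonal combination that must change sign, contradiction. Simplicity follows, and a final application of (b) to the unique nonnegative generator gives $\phi > 0$ a.e.

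For (a)$\Rightarrow$(b), I would argue by contradiction. Suppose $e^{-t_0 T}$ fails to improve positivity; then there exist nonzero $f, g \ge 0$ with
\[
 0 \;=\; \langle g,\, e^{-t_0 T}f\rangle \;=\; \langle e^{-t_0 T/2}g,\; e^{-t_0 T/2}f\rangle,
\]
so the nonnegative functions $G = e^{-t_0 T/2}g$ and $F = e^{-t_0 T/2}f$ have a.e.-disjoint supports. Both are nonzero and both have strictly positive $\phi$-components $\langle\phi, G\rangle, \langle\phi, F\rangle$, because $\phi > 0$ a.e. Using the spectral theorem together with $\ker(T-E) = \mathbb{R}\phi$, one obtains the strong $L^2$-limit $e^{sE}e^{-sT}h \to \langle\phi, h\rangle\phi$ as $s \to \infty$, so
\[
 e^{2sE}\langle e^{-sT}G,\ e^{-sT}F\rangle \;\longrightarrow\; \langle\phi, G\rangle\langle\phi, F\rangle \;>\; 0,
\]
giving $\langle g, e^{-(t_0+2s)T}f\rangle > 0$ for $s$ large. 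The remaining work, following the Reed--Simon semigroup-ergodicity argument, is to propagate the vanishing at $t_0$ to the rest of the $t$-axis and extract from it a nontrivial measurable decomposition $M = M_1 \sqcup M_2$ invariant under $\{e^{-tT}\}_{t \ge 0}$; any such invariant decomposition is incompatible with the existence of a strictly positive ground state $\phi$.

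The main obstacle is precisely this last propagation step. Analyticity of $t \mapsto \langle g, e^{-tT}f\rangle$ on $(0,\infty)$ (guaranteed because $-T$ generates a holomorphic semigroup, $T$ being self-adjoint and bounded below) together with nonnegativity on the real axis does not by itself force identical vanishing, since a real-analytic nonnegative function may admit isolated zeros of even order. The additional ingredient is the structural positivity of the full family $\{e^{-tT}\}_{t \ge 0}$, which by Faris-type arguments upgrades a single pointwise failure into an invariant decomposition of $L^2(M, d\mu)$. Rather than reprove this at length, I would invoke the classical Reed--Simon proof at this point, since the result is cited here only as a black-box ingredient for the applications to $m$-amenable operators.
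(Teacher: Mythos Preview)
The paper does not give its own proof of this lemma: it simply introduces it with the sentence ``Lemma B is Theorem XIII.44 of \cite{RS}'' and uses it as a black box in the proof of Proposition~\ref{prop:mamenom}. So there is nothing to compare against---your proposal already goes further than the paper by sketching the Reed--Simon argument.

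On the content of your sketch: the (b)$\Rightarrow$(a) direction is correct and complete as written. For (a)$\Rightarrow$(b) you correctly isolate the real difficulty (vanishing of $\langle g, e^{-tT}f\rangle$ at a single $t_0$ does not by itself contradict positivity at large $t$), and you are honest that you would invoke the Reed--Simon ergodicity/invariant-decomposition machinery rather than reprove it. That is entirely appropriate here, and in fact matches the paper's own stance of citing the result without proof. If you wanted to close the gap yourself, the standard route is to show that the set $\{t>0: \langle g, e^{-tT}f\rangle = 0\}$ is closed under addition of any $s>0$ with $\langle g, e^{-sT}f\rangle = 0$ and use the semigroup structure to build the invariant splitting; but for the purposes of this paper, a citation suffices.
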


An immediate consequence of the previous lemma is the equivalence of both definitions of m-amenability.

\begin{cor} \label{cor:equiv} Let $T:D \to H$ be a self-adjoint operator satisfying (m-a). Then (m-b) holds if and only if (m-b') does.
\end{cor}

As we shall see in the next section, the proof of Theorem \ref{theo:likem} uses the positivity of the eigenvector $\phi_m$ associated to the smallest eigenvalue $\lambda_m$ of some amenable operators. We obtain $\phi_m >0$ from Lemma \ref{lemma:44}, but our perturbation argument relies on the positivity of some semigroups, combined with Lemma \ref{lemma:Faris}.

Let $T:D \to H$ be m-amenable and $P: H \to H$ be a Nemitskii map m-compatible with $T$ induced by $f:\RR \to \RR$. We follow \cite{AP} (also \cite{B}) and define
\[V(u,v)(x) \ = \
\left\{ \begin{array}{ccc}  \displaystyle\frac{f(v(x))-f(u(x))}{v(x)-u(x)} & , &  \hbox{for} \ x\in \Omega \ : \ v(x) \neq u(x) . \medskip \\
a & , & \hbox{for} \  x\in \Omega \ : \ v(x) = u(x) .
\end{array}
\right .
\]
For  the associated multiplication operator $M_{V(u,v)}$,  $\sigma(M_{V(u,v)}) \subset [a,b]$, and  thus $T - V(u,v):D \to H$ is  self-adjoint by the Kato-Rellich theorem.

For a standard m-amenable map $P:H \to H$, the quotient
\[  V(u,v)(x) \ = \ \frac{P(v)(x) - P(u)(x)}{v(x)-u(x)} \]
is not appropriate, because we have no control on  $\sigma(M_{V(u,v)})$ from  (m-AC). Instead, notice that the Jacobians $DF(u) = T - DP(u): D \to H$ are  self-adjoint and for $u,v \in D$, define the {\it averaged Jacobian}
\[ AF(u,v) = \int_0^1  DF( u + s(v-u))  ds = T - \int_0^1  DP( u + s(v-u))  ds \ , \]
so that $AF = T - AP(u,v)$, for a symmetric, bounded operator $AP(u,v)$. By computing quadratic forms, $\sigma(AP(u,v)) \subset [a,b]$.
Notice that $AF(u,u) = DF(u)$ and $AF(u,v) = AF(v,u)$: to simplify statements, we treat Jacobians as special cases of averaged Jacobians.

\begin{prop} \label{prop:mamenom}  Let $u, v \in D$ and $S: D \to H$ be either $T - V(u,v)$ or $T - AP(u,v)$.  If $0 \in \sigma(S)$  then $0 = \min \sigma(S)$ and $S$ is $m$-amenable.
\end{prop}

\begin{proof} By hypothesis, $\sigma(T - S) \subset [a,b]$, for $a < \lambda_m < b < \mu_m$.
Thus, by the Weyl inequalities, only $\sigma_m = \min \sigma(S)$ can be zero, and $\inf \sigma(S) \setminus \{\sigma_m\} >0$. Also, $\sigma_m$ is necessarily an eigenvalue, by the fact that $\min \sigma(T)$ is elementary: this settles (m-a) for $S$. We now prove (m-b) for $S$.
%, or equivalently for $S - c \ I$ for some $c$.
For $t >0$,  $e^{-tT}$ is positivity improving by (m-b) for $T$. For some $c$, $ c \ I + V(u,v) $ is positive preserving, as is $c \ I + AP(u,v)$, by (m-Pos). Now, from Lemma A, $e^{-tS} = e^{-tcI} e^{-t(T - cI - AP(u,v))}$  or  $e^{-tS} = e^{-tcI} e^{-t(T - cI - V(u,v))}$ is positivity improving.   \qed
\end{proof}

\bigskip
%Thus, there is a unique (continuous) normalized eigenvector $\phi_m(u,v) > 0 $.

\subsection{Proof of Theorem \ref{theo:likem} }  \label{sec:NT}

To use Theorem \ref{theo:fold},
we prove (HAC) and (HPR) for $m$-compatible maps and then, by Propositions \ref{prop:LS} and  \ref{prop:proper}, (AC) and (PR)  follow.
Set $\gamma = (a+b)/2$ and write
\[F(u) = Tu - P(u) = (T - \gamma)u - (P - \gamma)u = T_\gamma u - P_\gamma(u) \ , \]
where clearly $T_\gamma$ is m-amenable and $P_\gamma$ is compatible with $T_\gamma$. The estimates for Nemitskii compatible maps  are familiar (\cite{AP}, \cite{BP}). For standard maps, by (m-AC), $\| DP_\gamma(u) \| \le b - \gamma$ for all $u \in D$. Thus $P_\gamma$ is a Lipschitz map with constant $L \le b - \gamma$. Also,  $ \| T_{\gamma,W}^{-1} \| = \| (T_W - \gamma I)^{-1} \| \le (\mu_m - \gamma)^{-1}$, so that (HAC) holds: $ L \| T_{\gamma,W}^{-1} \| < 1$, since $0 \le b - \gamma < \mu_m - \gamma$. The Lipschitz hypothesis in the Nemitskii case  implies (m-PR), which  is automatic in the standard case, and (HPR) follows.

We prove (NT).  Suppose by contradiction that there is $g \in H$ with  three distinct preimages $u, v, w \in D$, $ F(u)= F(v) = F(w) = g $, so that, for example,
\begin{equation}\label{0} F(v)-F(u)= T(v- u) - (P(v) - P(u)) = 0. \end{equation}
In the Nemitskii case, we follow \cite{AP} (also \cite{B}) and use $V(u,v)$ defined above,
\[ F(v)-F(u)= T(v- u) - V(u,v) (v-u) = 0 . \]
By Proposition \ref{prop:mamenom},  $v-u \in \ker(T - V(u,v))$ and has a definite sign. Without loss, then,  suppose $u < v < w$. From the strict convexity of $f$,
\[ V(u,w) \ = \  \frac{f(w) - f(u)}{w-u}  > \ \frac{f(w) - f(v)}{w-v} \ = \ V(v,w) \]
and $0$ cannot be the smallest eigenvalue of $T - V(u,v)$ and $T- V(u,w)$, by comparing quadratic forms at the respective eigenfunctions.

For a standard  m-amenable map $P$ instead,  write
$$ F(v)-F(u)= \int_0^1DF(s v +(1-s)u) \ ds \ (v-u)=AF(u,v)(v-u) =0  .$$

Thus  $v-u \in \ker AF(u,v), w-u \in \ker AF(u,w)$ and we take $u < v < w$. Hence $w-u > v - u$ and for $s \in [0,1]$, $ s w + (1-s) (w - u) > s w + (1-s) (w-v)$. Integration of hypothesis (m-NT) yields
\[\forall \ y \in D \setminus \{ 0 \}, \quad \langle y , (AF(u,w) -  AF(u,v)) y \rangle < 0 \ .
 \]
Let $z^\nu$ be the $L^2$ normalization of $z$. Then
\[ 0 = \langle (v-u)^\nu, AF(u,v) (v-u)^\nu \rangle \ >   \langle (v-u)^\nu, AF(u,w) (v-u)^\nu \rangle   \]
\[ \ge \langle (w-u)^\nu, AF(u,w) (w-u)^\nu \rangle  = 0 \ ,\]
and again the possibility of three preimages is excluded. For hypothesis (m-NT2), follow the argument in the proof of Theorem \ref{theo:likeM} in Section  \ref{sub:NC}.

At each fiber, $F^a$ behaves like $x \to - x^2$, by   Proposition \ref{prop:proper}  since  (HPR) holds. Hence $F:D \to H$ cannot be a homeomorphism and we are left with the second alternative in the statement of Theorem  \ref{theo:fold}. \qed

\subsection{Some amenable operators and a  variation} \label{section:examples}

The identification of operators $T$ generating positivity preserving semigroups (hypothesis (m-b)) is by itself a field of mathematics.  Arguments in the spirit of Bochner's theorem on distributions of positive type and the Levy-Khintchine formula (Appendix 2 to Section XIII.12, \cite{RS}, vol.IV) lead to a wealth of examples of such operators. If an operator $T_0$ gives rise to a positivity preserving semigroup (i.e., if it satisfies (m-b)), few weak hypotheses suffice to obtain the same for $T = T_0 + V$, from the Lie-Trotter formula.
We list a few assorted examples.

\begin{prop} \label{prop:m-amenable} The following operators are $m$-amenable.
\begin{itemize}
\item [(I)]$-\Delta$ for Dirichlet, Neumann, periodic or mixed boundary conditions on bounded smooth domains.
\item [(II)] The Schr\"{o}dinger operator for the hydrogen atom in $\RR^3$, $T v = - \Delta v - v/r$.
\item [(III)] The quantum oscillator in $\RR$, $Tv (x) = - v''(x) + x^2 v(x)$.
\item [(IV)] Fractional powers $T^s, s \in (0,1)$ of positive $m$-amenable operators.
\item [(V)] Spectral fractional Laplacians on bounded smooth domains.
\end{itemize}
\end{prop}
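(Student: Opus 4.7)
The overall plan is to verify the two defining conditions of $m$-amenability, namely (m-a) and (m-b), for each operator. In each case I would first check that $T$ is a self-adjoint operator on $L^2(M,d\mu)$ that is bounded below and has discrete spectrum near its infimum, so that $\lambda_m=\min\sigma(T)$ is an isolated eigenvalue of finite multiplicity (this yields parts (I) and (III) of the elementary-eigenvalue definition). Then I would prove (m-b), i.e.\ that $e^{-tT}$ is positivity improving for every $t>0$. Once (m-b) is in hand, Lemma B (i.e.\ Lemma \ref{lemma:44}) automatically gives that $\lambda_m$ is simple with a strictly positive eigenvector, delivering (m-a) and simultaneously excluding generalized eigenvectors, which is part (II) of the elementary condition. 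In this way the substantive verification reduces to establishing positivity improvement of the semigroup.

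For case (I), the Rellich--Kondrachov compactness of the embedding $H^1(\Omega)\hookrightarrow L^2(\Omega)$ gives a compact resolvent (suitably adapted for Dirichlet, Neumann, periodic, or mixed boundary conditions), so $\sigma(T)$ is discrete. The heat kernel $p_t(x,y)$ is smooth and strictly positive on the appropriate product domain for $t>0$, by the parabolic strong maximum principle; this gives (m-b) directly. For cases (II) and (III), the confining potential $x^2$ in the oscillator and the structure of the Coulomb attraction $-1/r$ (with Kato's inequality guaranteeing essential self-adjointness on $C_c^\infty$) ensure that $T$ is bounded below and has discrete spectrum in $(-\infty,0)$. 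Positivity improvement of $e^{-tT}$ can be obtained from the Feynman--Kac representation
\[ (e^{-tT}f)(x)=\mathbb{E}_x\!\left[e^{-\int_0^t V(B_s)\,ds}\,f(B_t)\right], \]
where the Brownian transition density and the exponential weight are strictly positive $\mu$-a.e.; alternatively for (III) one reads off positivity from Mehler's explicit kernel.

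For cases (IV) and (V), the method is Bochner subordination: if $T\ge 0$ is $m$-amenable, then
\[ e^{-tT^s}=\int_0^\infty e^{-\tau T}\,\mu_t^s(d\tau), \]
where $\mu_t^s$ is the one-sided $s$-stable probability measure supported on $(0,\infty)$. Since each $e^{-\tau T}$ is positivity improving and $\mu_t^s$ has full support on $(0,\infty)$, the integrand is strictly positive for $\tau$ in a set of full $\mu_t^s$-measure, so $e^{-tT^s}$ is positivity improving. The spectral mapping $\sigma(T^s)=\{\lambda^s:\lambda\in\sigma(T)\}$ preserves simplicity and isolation of the bottom eigenvalue (with the same ground state), so (m-a) persists. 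Case (V) is then case (IV) applied to the Dirichlet Laplacian of case (I).

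The main obstacle, as I see it, lies in case (II): the Coulomb singularity makes $V=-1/r$ unbounded below and prevents the direct application of the bounded-perturbation result of Lemma A. To handle it cleanly I would invoke that $1/r$ belongs to the Kato class on $\RR^3$, which both legitimizes Feynman--Kac and gives the needed semigroup properties; identifying the ground state as the 1s orbital $\phi\propto e^{-r/2}$ is then a consolation check rather than the crux. The other four cases are essentially immediate once the maximum principle or the subordination formula is invoked.
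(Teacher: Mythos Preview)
Your proposal is correct and broadly parallel to the paper's argument, but the route you take for items (II) and (III) is genuinely different. The paper, like you, observes that (m-a) is standard and that the work lies in (m-b); for (I) it simply cites the heat-semigroup literature (Arendt), and for (IV)--(V) it invokes Bochner subordination via Laplace transforms, exactly as you do. The divergence is in the hydrogen atom and the oscillator: rather than Feynman--Kac with a Kato-class potential, the paper treats the unbounded $V=-1/r$ by truncation. It sets $V_n=V\cdot\mathbf{1}_{\{|x|>1/n\}}$, notes that each $T_0+V_n$ and $T-V_n$ is uniformly bounded below by comparison of quadratic forms, and then appeals to Theorem XIII.45 of Reed--Simon (positivity improvement is stable under strong resolvent limits) after checking $q_n u\to 0$ in $L^2$ for $u\in H^2(\RR^3)$. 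The oscillator is handled ``similarly''. Your probabilistic route is arguably more direct and makes the strict positivity transparent via the Gaussian transition density (or Mehler's kernel), while the paper's approximation argument stays entirely within operator theory and avoids any stochastic machinery. One small correction: what ensures essential self-adjointness of $-\Delta-1/r$ is the Kato--Rellich theorem (relative boundedness with bound $<1$), not Kato's inequality.
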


\bigskip

Subtracting an m-compatible map from one such operator  yields a fold. Notice that for a function $f$ to induce a Nemitskii map acting on functions in $L^2(\Omega)$ for unbounded sets $\Omega \subset \RR^n$ we must have $f(0) = 0$.

\bigskip

\begin{proof}
Hypothesis (m-a) is familiar in all examples, we check (m-b).
For (I), see Sections 8.1 and 8.2 of \cite{Ar}.
For (II), take $T_0 = -\Delta$ with $D = H^2(\RR^3)$ and define $T = T_0 + V$ for the potential $V = - 1/r$. We prove (m-b) for $T$ using  Theorem XIII.45, vol. IV of \cite{RS}. Define the bounded truncations $V_n$ which coincide with $V$ for $|x| > 1/n$ and are zero otherwise. Set $q_n = V - V_n$.
Both $T_0$ and $T$ are bounded from below. Comparing quadratic forms, \[ T \le T_0 + V_n \quad \hbox{and} \quad T_0 \le T - V_n \ ,\]
so that $T_0 + V_n$ and $T - V_n$ are uniformly bounded from below. We are left with showing that $T_0 + V_n \to T$ and $T - V_n \to T_0$ in  the strong resolvent sense. By Theorem VIII.25, vol. I of \cite{RS} it suffices to show that, for a given $u \in  H^2$,
\[   q_n \ u \to 0  \quad \hbox{in} \ L^2(\RR^3) \ , \quad \hbox{i.e.} \quad \lim_{\epsilon \to 0} \ \int_{|x| \le \epsilon} \frac{u^2(x)}{|x|^2} \ dx \ = \ 0 \ , \]
which is true, since $H^2(\RR^3)$ consists of bounded, continuous functions. The proof of (III) is similar.
For (IV), use the arguments with Laplace transforms in Section IX.11 of \cite{Yo} (see also \cite{Tay}). Finally, (V) is a special case of (IV). \qed
\end{proof}

\bigskip

We consider a  nonlocal Nemitskii-type map which fits between both kinds of m-amenable maps.
Let $G \subset SO(n,\RR)$ be a closed subgroup  of rigid motions  and a $\Omega \subset \RR^n$ be a bounded, smooth, domain  which is invariant under the natural action of $G$. The group $G$ might be $SO(n,\RR)$ acting on the unit ball, or $G= \{ I, -I\}$ and $\Omega$ an even set,  $x \in \Omega \Leftrightarrow -x \in \Omega$.  Denote by $H_G \subset H= L^2(\Omega,dx)$ the subspace of $G$-invariant functions and by  $\pi:H\to H_G$ the associated orthogonal projection: for  the normalized Haar measure $\mu$ on $G$,
\[ \pi u (x) =  \int_G  u \circ g (x) \ d\mu(g) \ , \]
so that $\pi$ is positive preserving. An $m$-amenable operator $T:D \to H$ which commutes with $\pi$ necessarily has a simple smallest eigenvalue $\lambda_m$  and an associated eigenfunction $\phi_m >0$  which is $G$-invariant (take averages, use the simplicity of $\lambda_m$ and the fact that $\pi$ is positive preserving). As usual, $\mu_m = \inf \sigma(T) \setminus \{ \lambda_m\}$.

\begin{prop}  Take $G$, $\pi$ and $T$ as above. Let the strictly convex function $f:\mathbb R\to\mathbb R$  satisfy equation (1) and define
$ P: H \to H, P = f\circ\pi$. Then $F = T - P: D \to H$ is a  fold.
\end{prop}

The Ambrosetti-Prodi theorem is the case $G= \{e\}$.
The map $P: H \to H$ is not necessarily $C^1$, and we slightly modify the proof of Theorem \ref{theo:likem}.

\begin{proof}  The proofs of (HAC) and (HPR) (yielding (AC) and (PR)) are the same, we consider (NT). Again, suppose by contradiction that $g \in H$ has three (distinct) preimages $u, v, w \in D$.

Since $T$ e $\pi$ commute,  $\pi u, \pi v, \pi w$ are preimages of $\pi g$  --- we show they are distinct. Indeed, write $u = u_h + t_u \phi_m, v = v_h + t_v \phi_m$, where $u_h, v_h$ are orthogonal to $\phi_m$ and $t_u \ne t_v$'s. Then $\langle \phi_m , \pi u \rangle = \langle \pi \phi_m , u \rangle = \langle \phi_m , u \rangle = t_u$, since $\phi_m$ is a normal $G$-invariant vector.
Then
\[  T( \pi v) - T( \pi u) + f( \pi v) - f( \pi u) = 0 \]
and the (bounded) potential $V(\pi u_j,\pi u_i)$
yields an operator with nontrivial kernel
\[  T( \pi v -  \pi u) + V(\pi u,\pi v) ( \pi v -  \pi u) = 0 \ . \]
From Proposition \ref{prop:mamenom}, we may take $\pi u < \pi v < \pi w$ and $0$ is the smallest eigenvalue of both $T - V(\pi u,\pi v)$ and $T - V(\pi u,\pi w)$.  the contradiction follows as in the proof of  Theorem \ref{theo:likem} in the case of Nemitskii m-compatible maps. \qed
\end{proof}

\section{$M$-compatible maps} \label{sec:Nusslike}

An operator $T \in {\cal B}(X)$ is {\it positive} with respect to a cone $K$ if $T K  \subset K$ and {\it strongly positive} if $T(K \setminus \{0\}) \subset \mathring{K}$. Let the standard and the essential spectral radii of  $T $ be $r(T)$ and  $r_e(T)$. Points in $\{ z \in \sigma(T) \ , \ |z| > r_e(T) \}$  are isolated eigenvalues of finite algebraic multiplicity.
We use a result by  Zhang (\cite{Zh}).

\begin{thmL} \label{theo:zhang} Let $T \in {\cal B}(X)$ be a strongly positive operator with respect to a solid cone $K$, for which $r(T) > r_e(T)$. Then $\lambda_M = r(T)$ is an elementary eigenvalue of $T$  associated with an eigenvector $\phi_M \in \mathring{K}$.
\end{thmL}

In Zhang's original statement, $\lambda_M=r(T)$ is an isolated point of $\sigma(T)$ associated with an eigenvector $\phi \notin \Ran(T - \lambda_M I)$. This is certainly the case for an M-amenable operator $T$.

\begin{prop} Let $T \in {\cal B}(X)$ be M-amenable. Then there is $R>0$ for which $T + R \ I$ satisfies the hypotheses of Theorem  \ref{theo:zhang}.
\end{prop}

\begin{proof} Once property (M-b) holds, $T + R \ I$ is strongly positive for any $R > p$. Also, since $\lambda_M$ is elementary (and hence an isolated point of $\sigma(T)$),  for large $R$, we have $r(T + R \ I) > r_e( T + R \ I)$.
\qed
\end{proof}

\smallskip
The next corollary is the analog for M-admissible operators of the positivity of the ground state of m-admissible operators.

\begin{cor} \label{cor:phi} An M-amenable operator $T \in {\cal B}(X)$ with eigenvalue $\lambda_M = r(T)$ admits a unique eigenvector $\phi_M \in \mathring{K}$.
\end{cor}

Define the open ball $B_s = \{ E \in {\cal B}(X) \ | \ \| E \|< s\}$.

\begin{prop} \label{prop:ball} For some $\epsilon(T) > 0$, the  following properties hold.

\begin{enumerate}[(a)]
\item  The spectral radius $E \in B_{\epsilon(T)} \xrightarrow{{\tilde r}} r(T+E) \in (0, \infty)$ is  a real analytic map.
    \item The image ${\tilde {r}}(B_{\epsilon(T)})$ satisfies  ${\tilde r}(B_{\epsilon(T)}) \cap \sigma(T+ E) = r(T+E)$.
\item $\lambda_M(T+E) = r(T+E)$ is  elementary.
  \item  For some $R >0$, $r(T+E + R \ I)> r_e(T+E + R \ I)$.
  \end{enumerate}
\end{prop}

Thus, for $E \in B_{\epsilon(T)}$  the eigenvalue $\lambda_M$ is analytic and is the (unique) eigenvalue of largest modulus of $T$. Notice that $E$ is not necessarily positive.

\begin{proof}
Since  $r(T) = \lambda_M$ is elementary,   the upper semi-continuity of spectrum combined with the Dunford-Schwartz  calculus implies the existence of the required ball. The well known analyticity of $\lambda_M$ is outlined in  the Appendix. \qed
\end{proof}

\bigskip
For the rest of the section,  $K \subset X$ is a special cone and $T:X \to X$ is an M-amenable operator with respect to $K$ with an elementary eigenvalue $\lambda_M$ associated with $\phi_M \subset \mathring{K}$. From Proposition \ref{prop:phistar}, $\lambda_M$ is also an elementary eigenvalue of $T^\ast: X^\ast \to X^\ast$ associated with the eigenvector $\phi_M^\ast \in X^\ast$.
From Theorem 5 in \cite{K}, $\phi_M^\ast \in K^\ast \subset X^\ast$.
We consider an example. For a bounded set $\Omega \subset \RR^n$, the set of nonnegative continuous functions $K \subset X = C^0(\Omega)$ has nonempty interior, but the dual cone $K^\ast$, consisting of nonnegative measures (within a set of signed measures), does not. Still, $K^\ast$ is a generating cone of $X^\ast$, so that $K$ is special.

For $r = \lambda_M(T)$, the restriction $T_{r,W} = T - r \ I: W_X \to W_X$ is again invertible. Set $P = r I + P_r$. A $C^1$ map $P: X \to X$ is {\it $M$-compatible} with $T$ if it satisfies the  properties below.

\begin{enumerate}
\item [(M-AC)] The maps $\Pi  P_r: X \to W_X$ are Lipschitz  with a common constant $L$ for which $ L \| T_{r,W}^{-1} \| < 1$.
    \item [(M-PR)] There exist  $\lambda_-, \lambda_+, c_-, c_+ \in \RR$ with  $\lambda_- < r < \lambda_+$ such that
 \[   \forall \  u \in X , \quad \langle \phi_M^\ast, P_r(u) \rangle \, \ge \, (\lambda_- - r)  \, \langle \phi_M^\ast , u \rangle  \  + c_- \, ,  \ (\lambda_+ - r) \, \langle \phi_M^\ast , u \rangle  \ + c_+ \, . \]
\item [(M-Pos)] For some $p >0$ and any $u \in X$,   $p I - DP(u)$ is positive with respect to $K$.
\item [(M-NT)]For $z - y \in \mathring{K}$, $DP(z) - DP(y)$ is strongly positive with respect to $K$.
\end{enumerate}

For $u,v \in X$, we consider the averaged Jacobian $ AF(u,v): X \to X $ defined in Section \ref{sec:Laplike}. Let $\epsilon(T)$ be obtained from Proposition \ref{prop:ball}.
We state the counterpart of Proposition \ref{prop:mamenom} for M-amenable operators.

\begin{prop} \label{prop:mamenoM} Suppose $\| DP_r(u) \| < \epsilon(T)$ for all $u \in X$. Then, for $u,v \in X$,   $0$ is an eigenvalue of $AF(u,v)$ if and only if  $r(AF(u,v)+ r I)= r $. The operators
$DF(u)+rI, AF(u,v)+rI: X \to X$ are $M$-amenable with respect to  $K$.
\end{prop}

\begin{proof}
Clearly $0$ is an eigenvalue of $AF(u,v)$ if and only if $r$ is an eigenvalue of $AF(u,v)+Ir$. Since $\| AP_r(u,v) \| < \epsilon(T)$, Proposition \ref{prop:ball} gives  $r = r(AF(u,v)+rI)$, as well as (M-a) for  $DF(u)+r I = T - DP_r, AF(u,v)+r I = T - AP_r: X \to X$.  The proof of (M-b) for the cone $K$ is trivial from (M-Pos).  \qed
\end{proof}

\subsection{Proof of Theorem \ref{theo:likeM} } \label{sub:NC}

We again use Theorem \ref{theo:fold}.
To derive (HAC) from (M-AC), take $\gamma= r$ in the argument of Section \ref{section:general}. A sufficiently small $\delta_T < \epsilon(T) $ implies an appropriate Lipschitz constant for $P_r(u) = P(u) - r$, since $T_{r,W}$ is fixed.
As before, (M-PR) implies (HPR) and thus (AC) and (PR) are verified.

We prove (NT) by contradiction, modifying slightly the argument for standard m-amenable maps in Section \ref{sec:NT}. Here we use the fact that $K$ is a special cone.

For $F(u)=F(v)=F(w)=g$,
\[ AF(u,v)(v-u) =0 \ , \quad  AF(u,w)(w-u) =0 \ .\]
From the $M$-amenability of $AF+r$ (Proposition \ref{prop:mamenoM}) and Theorem \ref{theo:zhang}, we may suppose that
$ w- u , v - u \in \mathring{K} $ and  $AF(u,v)+r$ and  $AF(u,w)+r$ have spectral radius  $r = \lambda_M$. Since $K$ is special, there exist $ (w- u)^\ast , (v - u)^\ast \in K^\ast $ such that
\[ (AF(u,v)+r)^\ast (v- u)^\ast  = r (v- u)^\ast \ , \quad (AF(u,w)+r)^\ast (w- u)^\ast  = r (w- u)^\ast \ . \]

Since $w- u  \in \mathring{K}$ and $(v-u)^\ast \in K^\ast \setminus \{0\}$, we must have $\langle (v-u)^\ast , w - u \rangle >0$ (indeed, for any vector $z$ in some small ball centered at  $w-u$, we must have $\langle (v-u)^\ast , z\rangle \ge 0$). In the obvious equality
\[ r \ = \frac{\langle  (v - u)^\ast  , (AF(u,w) + r ) (w - u) \rangle }{\langle  (v - u)^\ast  ,  w - u \rangle}\ . \]
we want to replace $AF(u,w)$ by $AF(u,v) $:
\[ \langle  (v - u)^\ast  , (AF(u,w) + r ) (w - u) \rangle - \langle  (v - u)^\ast  , (AF(u,v) + r ) (w - u) \rangle \]
\[ = \langle  (v - u)^\ast  , (AP(u,w) - AP(u,v) ) (w - u) \rangle \]
\[ = \int_0^1 \langle  (v - u)^\ast  , (DP(u+ s(w-u)) - DP(u+ s(v-u) )) (w - u) \rangle \ ds \ > \ 0\ .\]
Indeed, from (M-NT), setting $z = u+ s(w-u)$ and $y = u+ s(v-u) $, so that  $z - y \in \mathring{K}$, we obtain
$ (DP(y) - DP(z )) (w - u) \in \mathring{K} $
and, since $(v - u)^\ast \in K^\ast$, the integrand is strictly positive. Returning to the previous expression,
\[ r > \frac{\langle  (v - u)^\ast  , (AF(u,v) + r ) (w - u) \rangle }{\langle  (v - u)^\ast  ,  w - u \rangle}  =  \frac{\langle  (AF(u,v) + r )^\ast (v - u)^\ast  ,  (w - u) \rangle }{\langle  (v - u)^\ast  ,  w - u \rangle}
 = r. \]
 %\ = \ \frac{r \ \langle (v - u)^\ast  ,  (w - u) \rangle }{\langle  (v - u)^\ast  ,  w - u \rangle}
and (NT) holds. As in Section \ref{sec:Laplike}, $F:X \to X$ cannot be a homeomorphism.  \qed

\bigskip
The presence of $\delta_T$ in the statement of Theorem \ref{theo:likeM} is the price of considering operators which are not self-adjoint. Some improvements are possible in special cases. The main result in \cite{STZ} is about  a Berestycki-Nirenberg-Varadhan operator $T: W^{2,n}(\Omega) \to L^n(\Omega)$, for a bounded set $\Omega \subset \RR^n$ with smooth boundary \cite{BNV}. It is not self-adjoint, but admits an eigenvalue $\lambda_m$ of smallest real part. Given $a <\lambda_m$, there is $b(a) > \lambda_m$ such that, for any Lipschitz strictly convex function $f: \RR \to \RR$ with $f'(\RR) = (a,b(a))$, the map $F(u) = Tu - P(u): W^{2,n}(\Omega) \to L^n(\Omega)$ is a  fold.

\bigskip
\noindent{\bf Appendix: Smoothness of simple eigenvalues}

We use a  simplified version of Proposition 79.15 from \cite{Z}. For   real Banach spaces $X \subset Y$, ${\cal B}(X,Y)$ is the Banach space of bounded linear maps from $X$ to $Y$ with the operator norm. Let $T_0 \in {\cal B}(X,Y)$ and $T_0^\ast \in {\cal B}(Y^\ast,X^\ast)$  have an elementary eigenvalue $\lambda_0 \in \RR$ associated with the eigenvectors $\phi_0 \in X$ of $T_0$ and $\phi_0^\ast \in Y^\ast \subset X^\ast$ of $T^\ast_0: Y^\ast \to X^\ast$, so that $\langle \phi_0^\ast, \phi_0 \rangle = 1$.
\medskip

\begin{lemma} \label{prop:abstract} There are open neighborhoods $U_0 \subset {\cal B}(X,Y)$ of $T_0$ and $\Lambda_0 \subset \RR$ of $\lambda_0$  so that each $T \in U_0$ has a unique eigenvalue $\lambda(T) \in \Lambda_0$. This eigenvalue is elementary and the map $T \mapsto \lambda(T)$  is analytic. Appropriately normalized eigenvectors $\phi(T), \phi^\ast(T), T \in U_0$ are also smooth. Along any direction $S \in {\cal B}(X,Y)$,
\[ D \lambda(T) \ . \  S \ = \ \langle \ \phi^\ast(T) \ , \ S \ \phi(T) \ \rangle \ .\]
\end{lemma}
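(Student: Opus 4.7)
The plan is to run a Lyapunov--Schmidt style implicit function theorem argument for the joint eigenvalue--eigenvector problem, combined with Riesz spectral projections to pin down uniqueness and the elementary property. The key structural facts enabling this are that elementarity of $\lambda_0$ forces $\phi_0 \notin \Ran(T_0 - \lambda_0 I)$ by (II), so that $Y = \langle \phi_0 \rangle \oplus W_Y$ with $W_Y = \Ran(T_0 - \lambda_0 I) = \ker \phi_0^\ast$, and similarly $X = \langle \phi_0 \rangle \oplus (X \cap \ker \phi_0^\ast)$ since $\langle \phi_0^\ast, \phi_0 \rangle = 1$. The normalization constraint $\langle \phi_0^\ast, \phi \rangle = 1$ singles out a natural affine slice on which to work.

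First I would introduce the map
\[ G: \mathcal{B}(X,Y) \times X \times \RR \to Y \times \RR, \quad G(T,\phi,\lambda) = \bigl( (T - \lambda I)\phi, \ \langle \phi_0^\ast, \phi\rangle - 1 \bigr), \]
which vanishes at $(T_0, \phi_0, \lambda_0)$ and is polynomial (hence analytic) in its arguments. I would then check that the partial derivative
\[ D_{(\phi,\lambda)} G(T_0,\phi_0,\lambda_0)(\psi,\mu) = \bigl( (T_0 - \lambda_0 I)\psi - \mu \phi_0, \ \langle \phi_0^\ast, \psi\rangle\bigr) \]
is a bijection from $X \times \RR$ onto $Y \times \RR$. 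Injectivity is immediate: pairing the first component with $\phi_0^\ast$ kills the $(T_0-\lambda_0 I)\psi$ term and yields $\mu=0$, so $\psi \in \ker(T_0 - \lambda_0 I) = \langle \phi_0\rangle$; the normalization constraint then forces $\psi = 0$. Surjectivity follows by splitting any target $(y,c) = (\alpha\phi_0 + y', c)$ with $y' \in W_Y$: solve $(T_0-\lambda_0 I)\psi_0 = y'$, set $\mu = -\alpha$, and adjust by a multiple of $\phi_0$ to satisfy the scalar constraint. The analytic implicit function theorem then produces the claimed analytic maps $T \mapsto (\phi(T), \lambda(T))$ on a neighborhood $U_0$ of $T_0$.

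Next I would verify \emph{uniqueness} of $\lambda(T)$ inside some $\Lambda_0 \ni \lambda_0$ and that $\lambda(T)$ remains elementary. This is where the implicit function theorem alone is insufficient, and is the main obstacle. My plan is to use that, by (III), $\lambda_0$ is isolated in $\sigma(T_0)$, so one can fix a small closed contour $\Gamma$ enclosing only $\lambda_0$ and define the Riesz projection $P(T) = \tfrac{1}{2\pi i}\oint_\Gamma (zI - T)^{-1}dz$; the projection $P(T_0)$ has rank one because of (I) and (II), which together rule out generalized eigenvectors. By continuity of the resolvent in $T$ (uniform on $\Gamma$), $P(T)$ is analytic in $T$ on a possibly smaller neighborhood, and its rank stays equal to one. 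Hence inside $\Gamma$ the total algebraic multiplicity of $\sigma(T)$ is exactly one, forcing a unique simple eigenvalue there, which must coincide with $\lambda(T)$ already produced. Stability of the Fredholm index then gives that $\lambda(T)$ remains elementary. The same argument applied to the dual operator $T^\ast$ (or duality) produces the analytic eigenvector $\phi^\ast(T)$, normalized so that $\langle \phi^\ast(T), \phi(T)\rangle = 1$.

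Finally, for the derivative formula I would differentiate the identity $(T - \lambda(T) I)\phi(T) = 0$ along $S \in \mathcal{B}(X,Y)$, obtaining
\[ S\phi(T) - \bigl(D\lambda(T)\cdot S\bigr)\phi(T) + (T - \lambda(T) I)\bigl(D\phi(T)\cdot S\bigr) = 0, \]
pair with $\phi^\ast(T)$, and use that $\phi^\ast(T) \in \ker(T^\ast - \lambda(T) I)$ to kill the last term, together with $\langle \phi^\ast(T), \phi(T)\rangle = 1$, to conclude $D\lambda(T)\cdot S = \langle \phi^\ast(T), S\phi(T)\rangle$. The whole argument is smoothness-agnostic: analyticity of $G$ and of $T \mapsto (zI-T)^{-1}$ give analytic output with no extra work.
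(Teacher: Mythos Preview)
Your argument is correct. The paper does not actually prove this lemma: it simply records it as ``a simplified version of Proposition 79.15'' in Zeidler's book and states the result, so your proposal supplies a proof where the paper offers only a reference. The strategy you outline---implicit function theorem applied to the normalized eigenvalue map $G$, Riesz spectral projection to secure uniqueness of the eigenvalue in $\Lambda_0$ and preservation of elementarity, and differentiation of $(T-\lambda(T)I)\phi(T)=0$ paired against $\phi^\ast(T)$ for the derivative formula---is exactly the standard route and is essentially what one finds in Zeidler or in Kato's perturbation theory. Your observation that the implicit function theorem alone does not yield the uniqueness claim (it only gives local uniqueness of the \emph{pair} $(\phi,\lambda)$, not of $\lambda$ alone) is well taken, and the Riesz projection is the clean remedy.

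One minor technical caveat worth making explicit in a write-up: the Riesz--Dunford calculus for $T\in\mathcal{B}(X,Y)$ with $X\subsetneq Y$ is most transparently justified by regarding $T$ as a closed operator on $Y$ with domain $X$, so that $(zI-T)^{-1}:Y\to X\hookrightarrow Y$ and the contour integral defines a genuine projection on $Y$. In the paper's concrete settings $X=D$ carries the graph norm of $T$, so this identification is automatic and the rank-stability argument goes through without change.
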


%\bigskip
%\noindent{\bf Appendix 2: Smoothing potentials preserving eigenvalues}
%
%Functions $u, v \in H$ are endpoints of the interval $u + t(v-u) \in H, t \in [0,1]$, which in turns gives rise to the potential $V(u,v) \in H \cap L^\infty$ defined above. By the Lipschitz-type hypotheses on $g$,  if $(u_n, v_n) \to (u,v)$ in the product norm, then $V(u_n, v_n) \to V(u,v)$ in $H$. As in \cite{CTZ2}, let $Z = H \cap L^\infty$, with the usual $H$-norm. From Proposition 3 in \cite{CTZ2}, an eigenvalue and the correspondent (normalized) eigenvector are continuous with respect to convergent sequences in $Z$ (i.e., $H$-convergent sequences in $Z$ whose limit belongs to $Z$).
%
%
%To obtain $(\tilde u, \tilde v) \in X \times X$ near $(u,v) \in H \times H$ such that $0 \in \sigma(V(\tilde u, \tilde v))$, we follow the proof of Proposition 9 in \cite{CTZ2}. Let $\psi >0, \psi \in H$ and a small $\epsilon >0$. Since $g' >0$, $E_H - V(u \pm \epsilon \phi, v \pm \epsilon \phi)$ have respectively a negative and a positive eigenvalue close to zero. Hence, there are continuous functions $(\tilde u^\pm, \tilde v^\pm)$ which are  sufficiently close to $(u \pm \epsilon \psi, v \pm \epsilon \psi)$ for which $E_H - V(\tilde u^\pm, \tilde v^\pm)$ have eigenvalues close to 0 of opposite signs, from the continuity properties of eigenvalues stated in the previous paragraph. By continuity again, there is a point $(\tilde u, \tilde v) \in X \times X$ in the interval joining the points $(\tilde u^\pm, \tilde v^\pm)$ for which $0 \in \sigma(E_H - V(\tilde u, \tilde v))$.

\subsection{Acknowledgements}

The first author acknowledges support from GNAMPA, the second and third from CAPES, CNPq and FAPERJ.

{

\parindent=0pt
\parskip=0pt
\obeylines

\bigskip

Marta Calanchi, Dipartimento di Matematica, Università di Milano,
Via Saldini 50, 20133 Milano, Italia, marta.calanchi@unimi.it

\smallskip

Carlos Tomei and André Zaccur, Departamento de Matem\'atica, PUC-Rio,
R. Mq. de S. Vicente 225, Rio de Janeiro, RJ 22453-900, Brazil,
carlos.tomei@gmail.com, zaccur.andre@gmail.com
}

\end{document}